\newcommand{\Rmnum}[1]{\expandafter\@slowromancap\romannumeral #1@}
\newtheorem{lem}{Lemma}[section]  \newtheorem{thm}{Theorem}[section]
\newtheorem{cor}{Corollary}[section] \newtheorem{defn}{Definition}[section] \newtheorem{rmk}{Remark}[section] 
\numberwithin{equation}{section}
\newenvironment{proof}{{\noindent\it Proof}\quad}{\hfill $\square$\par}
 \newcommand{\me}{\mathrm{e}} 
\newcommand{\dif}{\mathrm{d}} \DeclareMathAlphabet{\mathsfsl}{OT1}{cmss}{m}{sl} \DeclareMathAlphabet{\mathpzc}{OT1}{pzc}{m}{it}
    \newcommand{\ee}{\mathbb{E}}
\newcommand{\hh}{\mathbb{H}}   \newcommand{\nn}{\mathbb{N}} \newcommand{\rr}{\mathbb{R}}
\newcommand{\vv}{\mathbb{V}}
 \def\CC{\mathcal C}   \def\FF{\mathcal F}  \def\HH{\mathcal H}
\def\d"{^{\prime\prime}} \def\d'{^{\prime}}
\begin{document}

	\begin{center}{\LARGE\bf Convergence for sums of i. i. d. random variables under sublinear expectations}
	\end{center}
	
	%\title[]{}
	%\thanks{Project supported by Doctoral Scientific Research Starting Foundation of Jingdezhen Ceramic Institute (Nos.102/01003002031), Scientific Program of Department of Education of Jiangxi Province of China (Nos. GJJ190732, GJJ180737), Natural Science Foundation Program of Jiangxi Province 20202BABL211005, National Natural Science Foundation of China (Nos. 61662037)}
	%\subjclass[2000]{60F15, 60F05}

	%\keywords{Complete moment convergence; Capacity; I. i. d. random variables; Weighted sums; Sublinear expectation}
	%\date{} \maketitle
	
	\begin{center}
		Mingzhou Xu~\footnote{Email: mingzhouxu@whu.edu.cn} \quad Kun Cheng~\footnote{Email: chengkun0010@126.com}
		\\
		School of Information Engineering, Jingdezhen Ceramic University\\
		Jingdezhen 333403, China
	\end{center}
	
	\renewcommand{\abstractname}{~}
	\begin{abstract}
		{\bf Abstract:}
		In this paper, we obtain the equivalent conditions of complete moment convergence of the maximum for partial weighted sums of independent, identically distributed random variables under sublinear expectations space. The results obtained in the article are the extensions of the equivalent conditions of complete moment convergence of the maximum under classical linear expectation space.
		
		{\bf Keywords:}  Complete moment convergence; Capacity; I. i. d. random variables; Weighted sums; Sublinear expectation
		
		{\bf 2020 Mathematics Subject Classifications:} 60F15, 60F05
		\vspace{-3mm}
	\end{abstract}

	\section{Introduction }
	Peng \cite{Peng2007,Peng2010} first initiated the important concept of  the sublinear expectation space to study the uncertainty of probability and distribution. The seminal works of Peng \cite{Peng2007,Peng2010} attracted people to study inequalities and limit theorems under sublinear expectation space.  Zhang \cite{Zhang2015,Zhang2016a,Zhang2016b} obtained important inequalities including exponential inequalities, Rosenthal's inequalities, and studied Donsker's invariance principle under sublinear expectations. Inspired by the works of Zhang \cite{Zhang2015,Zhang2016a,Zhang2016b,Zhang2016c}, Huang and Wu \cite{Huang2019}, Zhong and Wu \cite{Zhong2017} studied some limits theorems under sublinear expectation space. Recently under sublinear expectations, Wu \cite{Wu2020} proved precise asymptotics for complete integral convergence, Xu and Cheng \cite{Xu2021} establish precise asymptotics in the law of iterated logarithm. Under sublinear expectations for more limit theorems, the interested reader could refer to  Chen \cite{Chen2016}, Xu \cite{Gao2011}, Hu et al. \cite{Hufeng2014}, Hu and Yang \cite{Huzechun2017} and references therein.
	
	Recently Meng et al. \cite{Meng2019} studied convergence for sums of asymptotically almost negatively associated random variables. For references on complete convergence in linear expectation space, the interested reader could refer to Meng et al. \cite{Meng2019}, Shen and Wu \cite{Shen2014} and references therein. The work of Meng et al. \cite{Meng2019} motivate us to wonder whether or not the equivalent conditions of complete moment convergence of the maximum for partial weighted sums of independent, identically distributed random variables under sublinear expectations hold. Here we get that the equivalent conditions of complete moment convergence of the maximum for partial weighted sums of independent, identically distributed random variables  hold under sublinear expectations, which complement the results of Meng et al. \cite{Meng2019} to those under sublinear expectations.
	
	We organize the rest of this paper as follows. In the next section, we recall necessary notions, concepts and relevant properties, and present necessary lemmas under sublinear expectations. In Section 3, we present our main results, Theorems \ref{thm1}, \ref{thm2},  whose proofs are given in Section 4.
	
	\section{Preliminaries}
	We use notations as in the work by Peng \cite{Peng2010}. Suppose that $(\Omega,\FF)$ is a given measurable space. Let $\HH$ be a subset of all random variables on $(\Omega,\FF)$ such that $I_A\in \HH$, where $A\in\FF$, and  $X_1,\cdots,X_n\in \HH$ implies $\varphi(X_1,\cdots,X_n)\in \HH$ for each (local lipschitz) function  $\varphi\in \CC_{l,Lip}(\rr^n)$ satisfying
	$$
	|\varphi(\mathbf{x})-\varphi(\mathbf{y})|\le C(1+|\mathbf{x}|^m+|\mathbf{y}|^m)(|\mathbf{x}-\mathbf{y}|), \forall \mathbf{x},\mathbf{y}\in \rr^n
	$$
	for some $C>0$, $m\in \nn$, which depend on $\varphi$.
	\begin{defn}\label{defn1} A sublinear expectation $\ee$ on $\HH$ is a functional $\ee:\hh\mapsto \bar{\rr}:=[-\infty,\infty]$ satisfying the following properties: for all $X,Y\in \HH$, we have
		\begin{description}
			\item[\rm (a)] If $X\ge Y$, then $\ee[X]\ge \ee[Y]$;
			\item[\rm (b)] $\ee[c]=c$, $\forall c\in\rr$;
			\item[\rm (c)] $\ee[\lambda X]=\lambda\ee[X]$, $\forall \lambda\ge 0$;
			\item[\rm (d)] $\ee[X+Y]\le \ee[X]+\ee[Y]$ whenever $\ee[X]+\ee[Y]$ is not of the form $\infty-\infty$ or $-\infty+\infty$.
		\end{description}
		Here, we refer to {\rm (a)}-{\rm (d)} as  monotonicity, constant preserving, positive homogeneity, subadditivity of $\ee[\cdot]$ respectively.
	\end{defn}
	
	A set function $V:\FF\mapsto[0,1]$ is called a capacity if it obeys
	\begin{description}
		\item[\rm (a)]$V(\emptyset)=0$, $V(\Omega)=1$;
		\item[\rm (b)]$V(A)\le V(B)$, $A\subset B$, $A,B\in \FF$.
	\end{description}
	A capacity $V$ is said to be sub-additive if $V(A+B)\le V(A)+V(B)$, $A,B\in \FF$.

	In this paper, given a sublinear expectation space $(\Omega, \HH, \ee)$, we set a capacity: $\vv(A):=\ee[I_A]$, $\forall A\in \FF$. Clearly $\vv$ is a sub-additive capacity. We set the Choquet expectations $\CC_{\vv}$ by
	$$
	\CC_{\vv}(X):=\int_{0}^{\infty}\vv(X>x)\dif x +\int_{-\infty}^{0}(\vv(X>x)-1)\dif x.
	$$
	
	Given two random vectors $\mathbf{X}=(X_1,\cdots, X_m)$, $X_i\in\HH$ and $\mathbf{Y}=(Y_1,\cdots,Y_n)$, $Y_i\in \HH$  on  $(\Omega, \HH, \ee)$, $\mathbf{Y}$ is said to be independent of $\mathbf{X}$, if for each Borel-measurable function $\psi$ on $\rr^m\times \rr^n$ with $\psi(\mathbf{X},\mathbf{Y}), \psi(\mathbf{x},\mathbf{Y})\in \HH$ for each $x\in\rr^m$, we have $\ee[\psi(\mathbf{X},\mathbf{Y})]=\ee[\ee\psi(\mathbf{x},\mathbf{Y})|_{\mathbf{x}=\mathbf{X}}]$ whenever $\bar{\psi}(\mathbf{x}):=\ee[|\psi(\mathbf{x},\mathbf{Y})|]<\infty$ for each $\mathbf{x}$ and $\ee[|\bar{\psi}(\mathbf{X})|]<\infty$ (cf. Definition 2.5 in Chen \cite{Chen2016} ). $\{X_n\}_{n=1}^{\infty}$ is said to be a sequence of independent random variables, if $X_{n+1}$ is independent of $(X_1,\cdots,X_n)$ for each $n\ge 1$.
	
	Assume that $\mathbf{X}_1$ and $\mathbf{X}_2$ are two $n$-dimensional random vectors defined, respectively, in sublinear expectation spaces $(\Omega_1,\HH_1,\ee_1)$ and $(\Omega_2,\HH_2,\ee_2)$. They are said to be identically distributed if  for every Borel-measurable function $\psi$ such that $\psi(X_1), \psi(X_2)\in \HH$,
	$$
	\ee_1[\psi(\mathbf{X}_1)]=\ee_2[\psi(\mathbf{X}_2)], \mbox{  }
	$$
	whenever the sublinear expectations are finite. $\{X_n\}_{n=1}^{\infty}$ is said to be identically distributed if for each $i\ge 1$, $X_i$ and $X_1$ are identically distributed.
	
	Throughout this paper we assume that $\ee$ is countably sub-additive, i.e., $\ee(X)\le \sum_{n=1}^{\infty}\ee(X_n)$, whenever $X\le \sum_{n=1}^{\infty}X_n$, $X,X_n\in \HH$, and $X\ge 0$, $X_n\ge 0$, $n=1,2,\ldots$.  $C$ represents a positive constant, which may differ in different lines. $I(A)$ or $I_A$ stands for the indicator function of $A$, $a_n\ll b_n$ means that there exists a constant $C>0$ such that $a_n\le C b_n$ for $n$ large enough and let $a_n\approx b_n$ denote that $a_n\ll b_n$ and $b_n\ll a_n$. Denote $\ln\max\{\me, x\}$ by $\log x$.
	
	To establish our results, we need the following lemmas.
	\begin{lem}\label{lem1} Let $Y$ be a random variable under sublinear expectation space $(\Omega,\HH,\ee)$ . Then for any $\alpha>0$, $\gamma>0$, and $\beta>-1$,
		\begin{eqnarray*}
			&({\rm\expandafter{\romannumeral1}})&\int_{1}^{\infty}u^{\beta}\CC_{\vv}(|Y|^{\alpha}I(|Y|>u^{\gamma}))\dif u\le C \CC_{\vv}(|Y|^{(\beta+1)/\gamma+\alpha}),\\
			&({\rm\expandafter{\romannumeral2}})&\int_{1}^{\infty}u^{\beta}\ln(u)\CC_{\vv}(|Y|^{\alpha}I(|Y|>u^{\gamma}))\dif u\le C \CC_{\vv}(|Y|^{(\beta+1)/\gamma+\alpha}\ln(1+|Y|)).
		\end{eqnarray*}
	\end{lem}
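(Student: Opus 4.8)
The plan is to reduce everything to the layer-cake representation of the Choquet integral of a nonnegative random variable, namely $C_{\vv}(Y)=\int_0^\infty \vv(Y>t)\,\dif t$ for $Y\ge 0$, together with the scaling identity $C_{\vv}(|X|^p)=p\int_0^\infty \vv(|X|>s)s^{p-1}\,\dif s$ (obtained from the substitution $t=s^p$), where I abbreviate $p:=(\beta+1)/\gamma+\alpha$; note $p>\alpha>0$ since $\beta+1>0$. First I would evaluate the inner Choquet integral explicitly. Since $\{|X|^\alpha I(|X|>u^\gamma)>t\}=\{|X|>\max(u^\gamma,t^{1/\alpha})\}$ for $t>0$, splitting the $t$-integral at $t=u^{\gamma\alpha}$ and substituting $s=t^{1/\alpha}$ in the upper piece gives
$$C_{\vv}\bigl(|X|^\alpha I(|X|>u^\gamma)\bigr)=u^{\gamma\alpha}\vv(|X|>u^\gamma)+\alpha\int_{u^\gamma}^\infty \vv(|X|>s)s^{\alpha-1}\,\dif s.$$

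Inserting this into the outer integral of (i) splits the bound into two terms, say $A$ (from the point mass) and $B$ (from the tail integral). For $A=\int_1^\infty u^{\beta+\gamma\alpha}\vv(|X|>u^\gamma)\,\dif u$ I would substitute $s=u^\gamma$, which turns the exponent into $p-1$ and yields $A=\gamma^{-1}\int_1^\infty \vv(|X|>s)s^{p-1}\,\dif s$. For $B=\alpha\int_1^\infty u^\beta\int_{u^\gamma}^\infty\vv(|X|>s)s^{\alpha-1}\,\dif s\,\dif u$ I would apply Tonelli (legitimate since the integrand is nonnegative) to swap the order over the region $\{1\le u,\ u^\gamma\le s\}$; the inner $u$-integral satisfies $\int_1^{s^{1/\gamma}}u^\beta\,\dif u\le (\beta+1)^{-1}s^{(\beta+1)/\gamma}$ (using $\beta>-1$), which again produces the exponent $p-1$. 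Both $A$ and $B$ are then dominated by a constant multiple of $\int_0^\infty \vv(|X|>s)s^{p-1}\,\dif s=p^{-1}C_{\vv}(|X|^p)$, which is exactly (i).

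For (ii) the same decomposition works verbatim with an extra factor $\ln u$ carried through. In the $A$-term the substitution $s=u^\gamma$ converts $\ln u$ into $\gamma^{-1}\ln s$, and in the $B$-term the inner integral is bounded via $\int_1^{s^{1/\gamma}}u^\beta\ln u\,\dif u\le \ln(s^{1/\gamma})\int_1^{s^{1/\gamma}}u^\beta\,\dif u$; both reduce to $\int_1^\infty \vv(|X|>s)s^{p-1}\ln s\,\dif s$. To match the target I would use that $\ln s\le\ln(1+s)$ for $s\ge 1$ and that $\phi(s)=s^p\ln(1+s)$ has $\phi'(s)=ps^{p-1}\ln(1+s)+s^p/(1+s)\ge ps^{p-1}\ln(1+s)$, so the scaling representation gives $\int_0^\infty\vv(|X|>s)s^{p-1}\ln(1+s)\,\dif s\le p^{-1}C_{\vv}(|X|^p\ln(1+|X|))$, yielding (ii). The point to be careful about is the validity of these layer-cake and substitution identities for the merely monotone, sub-additive set function $\vv$ rather than a genuine measure; but all the integrals above are ordinary Lebesgue integrals in the real variables $t,s,u$, so Tonelli and the changes of variables apply directly to the function $s\mapsto\vv(|X|>s)$ and no additivity of $\vv$ is needed. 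The only genuinely arithmetic obstacle is keeping track of the exponents, which I have checked collapses to $p-1$ in every term.
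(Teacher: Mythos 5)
Your proposal is correct and follows essentially the same route as the paper's own proof: both evaluate the inner Choquet expectation by the layer-cake formula split at the indicator threshold, apply Tonelli to swap the order of integration, bound the inner $u$-integral using $\beta>-1$, and conclude via the scaling representation $\int_0^\infty \vv(|X|>s)s^{p-1}\,\dif s \le p^{-1}C_{\vv}(|X|^p)$ (and its logarithmic analogue for part (ii)). The only difference is that you spell out the final comparison with $C_{\vv}(|X|^p\ln(1+|X|))$ via the derivative bound on $s^p\ln(1+s)$, where the paper simply cites the proof of Lemma 2.2 in Zhong and Wu.
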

	\begin{proof}(\expandafter{\romannumeral1})
		$$
		\begin{aligned}
			&\int_{1}^{\infty}u^{\beta}\CC_{\vv}(|Y|^{\alpha}I(|Y|>u^{\gamma}))\dif u\\
			&=\int_{1}^{\infty}u^{\beta}\left(\int_{0}^{u^{\gamma}}\vv(|Y|>u^{\gamma})\alpha t^{\alpha-1}\dif t+\int_{u^{\gamma}}^{\infty}\vv(|Y|^{\alpha}>t^{\alpha})\alpha t^{\alpha-1}\dif t\right)\dif u\\
			&=\int_{1}^{\infty}u^{\beta+\alpha\gamma}\vv(|Y|>u^{\gamma})\dif u+\int_{1}^{\infty}\alpha t^{\alpha-1}\vv(|Y|>t)\left(\int_{1}^{t^{1/\gamma}}u^{\beta}\dif u\right)\dif t\\
			&\le C \CC_{\vv}(|Y|^{(\beta+1)/\gamma+\alpha})+C\int_{1}^{\infty}t^{\alpha-1+(\beta+1)/\gamma}\vv(|Y|>t)\dif t\\
			&\le C \CC_{\vv}(|Y|^{(\beta+1)/\gamma+\alpha}).
		\end{aligned}
		$$
		(\expandafter{\romannumeral2}) By the proof of Lemma 2.2 in Zhong and Wu \cite{Zhong2017}
		$$
		\begin{aligned}
			&\int_{1}^{\infty}u^{\beta}\ln(u)\CC_{\vv}(|Y|^{\alpha}I(|Y|>u^{\gamma}))\dif u\\
			&=\int_{1}^{\infty}u^{\beta}\ln(u)\left(\int_{0}^{u^{\gamma}}\vv(|Y|>u^{\gamma})\alpha t^{\alpha-1}\dif t+\int_{u^{\gamma}}^{\infty}\vv(|Y|^{\alpha}>t^{\alpha})\alpha t^{\alpha-1}\dif t\right)\dif u\\
			&=\int_{1}^{\infty}u^{\beta+\alpha\gamma}\ln(u)\vv(|Y|>u^{\gamma})\dif u+\int_{1}^{\infty}\alpha t^{\alpha-1}\vv(|Y|>t)\left(\int_{1}^{t^{1/\gamma}}u^{\beta}\ln(u)\dif u\right)\dif t\\
			&\le C \CC_{\vv}(|Y|^{(\beta+1)/\gamma+\alpha}\ln(1+|Y|))+C\int_{1}^{\infty}t^{\alpha-1+(\beta+1)/\gamma}\ln(t)\vv(|Y|>t)\dif t\\
			&\le C \CC_{\vv}(|Y|^{(\beta+1)/\gamma+\alpha}\ln(1+|Y|)).
		\end{aligned}
		$$
	\end{proof}
	Write $S_k=X_1+\cdots+X_k$, $S_0=0$.
	\begin{lem}\label{lem2} (cf. Corollary 2.2 and Theorem 2.3 in Zhang \cite{Zhang2016b}) Suppose that $X_{k+1}$ is independent of $(X_1,\ldots,X_k)$ under sublinear expectation space $(\Omega,\HH,\ee)$ with $\ee(X_i)\le 0$, $k=1,\ldots,n-1$. Then
		\begin{equation}\label{01}
			\ee\left[\left|\max_{k\le n}(S_n-S_k)\right|^M\right]\le 2^{2-M}\sum_{k=1}^{n}\ee[|X_k|^M], \mbox{   for $1\le M\le 2$,}
		\end{equation}
		\begin{equation}\label{02}
			\ee\left[\left|\max_{k\le n}(S_n-S_k)\right|^M\right]\le C_M\left\{\sum_{k=1}^{n}\ee[|X_k|^M]+\left(\sum_{k=1}^{n}\ee[|X_k|^2\right)^{M/2}\right\}, \mbox{   for $ M\ge 2$.}
		\end{equation}
	\end{lem}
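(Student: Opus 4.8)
The plan is to exploit the backward recursion that the quantity $M_n:=\max_{k\le n}(S_n-S_k)$ satisfies. Peeling off the last summand $X_n$ and using $S_n-S_k=X_n+(S_{n-1}-S_k)$ for $k\le n-1$ together with the vanishing term at $k=n$, one checks that $M_n=\bigl(M_{n-1}+X_n\bigr)^{+}$, where $M_{n-1}$ is a function of $(X_1,\dots,X_{n-1})$ only. This is the decisive structural point: because the hypothesis is the \emph{forward} independence $X_{k+1}\perp(X_1,\dots,X_k)$, the variable $X_n$ is independent of $M_{n-1}$, so the recursion respects exactly the directional independence available under the sublinear expectation. I would carry out both estimates \eqref{01} and \eqref{02} by induction on $n$ through this recursion, so that the nonpositive-mean condition $\ee(X_k)\le 0$ can be used to discard the first-order term at each step; note also that $M_n\ge 0$, so the absolute value in the statement is harmless.

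For $1\le p\le 2$ the key analytic input is the elementary pointwise inequality
\[
\bigl((a+x)^{+}\bigr)^{p}\le a^{p}+p\,a^{p-1}x+2^{2-p}|x|^{p},\qquad a\ge 0,\ x\in\rr,
\]
which I would verify by one-variable calculus (it is sharp at $p=1$ and $p=2$). Applying it with $a=M_{n-1}$ and $x=X_n$, taking $\ee$, and using sub-additivity to peel off the nonnegative term $2^{2-p}|X_n|^p$, it remains to bound $\ee[M_{n-1}^p+p\,M_{n-1}^{p-1}X_n]$. Here I would invoke the definition of independence: conditioning on $x=M_{n-1}\ge0$ gives $\ee[x^{p}+p\,x^{p-1}X_n]=x^{p}+p\,x^{p-1}\ee[X_n]\le x^{p}$ since $\ee(X_n)\le0$, whence by monotonicity this term is at most $\ee[M_{n-1}^p]$. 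This produces $\ee[M_n^p]\le\ee[M_{n-1}^p]+2^{2-p}\ee[|X_n|^p]$, and the induction closes to give \eqref{01}.

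For $p\ge 2$ I would replace the first-order inequality by a second-order version of the form
\[
\bigl((a+x)^{+}\bigr)^{p}\le a^{p}+p\,a^{p-1}x+C_p\bigl(a^{p-2}x^{2}+|x|^{p}\bigr),\qquad a\ge 0,
\]
again obtained by Taylor expansion with remainder. The first two terms are handled as above, the independence of $M_{n-1}$ and $X_n$ turns $\ee[M_{n-1}^{p-2}X_n^2]$ into $\ee[M_{n-1}^{p-2}]\,\ee[X_n^{2}]$, and Hölder's inequality under the sublinear expectation bounds $\ee[M_{n-1}^{p-2}]\le(\ee[M_{n-1}^{p}])^{(p-2)/p}$. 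Writing $A_n=\ee[M_n^p]$, $B_n=\sum_{k\le n}\ee[X_k^2]$ and $D_n=\sum_{k\le n}\ee[|X_k|^p]$, this yields the nonlinear recursion $A_n\le A_{n-1}+C_pA_{n-1}^{(p-2)/p}(B_n-B_{n-1})+C_p(D_n-D_{n-1})$, which I would resolve by comparison with the differential inequality $A'\lesssim A^{1-2/p}B'+D'$ to obtain $A_n\le C_p(B_n^{p/2}+D_n)$, i.e.\ \eqref{02}.

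The step I expect to be the main obstacle is precisely this last resolution of the recursion for $p\ge 2$: one must keep the constant $C_p$ uniform in $n$ while iterating and control the growth of $A_{n-1}^{(p-2)/p}$, which is the sublinear-expectation analogue of the delicate interpolation between the $L^2$-scale (the variance term $B_n^{p/2}$) and the $L^p$-scale (the term $D_n$) in the classical Rosenthal inequality. The remaining technical care is in justifying the applications of the independence definition and of Hölder's inequality within the class $C_{l,Lip}$, and in checking that the recursion genuinely uses only the forward independence and $\ee(X_k)\le0$, with no symmetry of independence assumed.
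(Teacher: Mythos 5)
Your proposal addresses a statement the paper itself never proves: Lemma \ref{lem2} is simply quoted from Corollary 2.2 and Theorem 2.3 of Zhang \cite{Zhang2016b}, so the only comparison available is with that cited source --- and your reconstruction follows essentially the same route as the proofs there. The backward recursion $M_n=(M_{n-1}+X_n)^{+}$ is exactly the device that makes the one-directional independence of $X_n$ from $(X_1,\dots,X_{n-1})$ and the condition $\ee(X_k)\le 0$ usable, and the first- and second-order pointwise expansions you propose are the standard ingredients. Your treatment of $1\le p\le 2$ is complete once the elementary inequality $((a+x)^{+})^{p}\le a^{p}+pa^{p-1}x+2^{2-p}|x|^{p}$ for $a\ge 0$ is verified; it does hold with that constant (it is the von Bahr--Esseen-type bound, sharp at $p=2$), and your handling of the cross term --- conditioning on $x=M_{n-1}$, using positive homogeneity and $\ee(X_n)\le 0$, then monotonicity --- is a correct use of this paper's Borel-function definition of independence, so no $C_{l,Lip}$ approximation issue arises here.

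The one schematic step is the resolution of the recursion for $p\ge 2$: the comparison with the differential inequality $A'\lesssim A^{1-2/p}B'+D'$ is not rigorous as stated, but the gap is fillable by a standard discrete absorption argument, so it is not fatal. Writing $A_n=\ee[M_n^p]$, $b_k=\ee[X_k^2]$, $d_k=\ee[|X_k|^p]$, your recursion $A_n\le A_{n-1}+C_pA_{n-1}^{1-2/p}b_n+C_pd_n$ remains valid with each $A_k$ replaced by the running maximum $A_k^{*}=\max_{j\le k}A_j$; telescoping and the monotonicity of $A_k^{*}$ give $A_n^{*}\le C_p(A_n^{*})^{1-2/p}B_n+C_pD_n$, where $B_n=\sum_{k\le n}b_k$ and $D_n=\sum_{k\le n}d_k$. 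Since $A_n^{*}<\infty$ a priori (crudely, $M_n^p\le n^{p-1}\sum_{k\le n}|X_k|^p$), either the first term on the right dominates, in which case $A_n^{*}\le 2C_p(A_n^{*})^{1-2/p}B_n$ and hence $A_n^{*}\le (2C_p)^{p/2}B_n^{p/2}$, or else $A_n^{*}\le 2C_pD_n$; in both cases $A_n\le (2C_p)^{p/2}B_n^{p/2}+2C_pD_n$, which is (\ref{02}) with a constant depending only on $p$, uniformly in $n$. With this replacement of your heuristic, and the routine verification of the two pointwise inequalities, your argument is a correct, self-contained proof of the lemma.
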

\begin{lem}\label{lem2*}(cf. Zhang \cite{Zhang2016b}) Let $\{X_n;n\ge 1\}$ be a sequence of independent random variables under sublinear expectation space $(\Omega,\HH,\ee)$. Then for $M\ge 2$,
	\begin{eqnarray}\label{03}
		\nonumber&&\ee\max_{1\le j\le n}\left|\sum_{i=1}^{j}X_i\right|^M\\
		&&\quad\le C \left(\sum_{i=1}^{n}\ee|X_i|^M+\left(\sum_{i=1}^{n}\ee|X_i|^2\right)^{M/2}+\left(\sum_{i=1}^{n}\left[|\ee(X_i)|+|\ee(-X_i)|\right]\right)^M\right).
	\end{eqnarray}
\end{lem}
	\begin{lem}\label{lem3} Let $Y$ be a random variable under sublinear expectation space $(\Omega,\HH,\ee)$. Then for any $\alpha>0$, $\gamma>0$, and $\beta<-1$,
		\begin{eqnarray}\label{03}
			&&\nonumber({\rm\expandafter{\romannumeral1}})\int_{1}^{\infty}u^{\beta}\ee\left[|Y|^{\alpha}I(|Y|\le u^{\gamma})\right]\dif u\le C \CC_{\vv}\left(|Y|^{(\beta+1)/\gamma+\alpha}\right),\\
			&&({\rm\expandafter{\romannumeral2}})\int_{1}^{\infty}u^{\beta}\ln(u)\ee\left[|Y|^{\alpha}I(|Y|\le u^{\gamma})\right]\dif u\le C \CC_{\vv}\left(|Y|^{(\beta+1)/\gamma+\alpha}\ln(1+|Y|)\right).
		\end{eqnarray}
	\end{lem}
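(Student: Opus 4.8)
The plan is to reduce the statement, which is phrased through the sublinear expectation $\ee$, to a Choquet-integral estimate of exactly the kind already treated in Lemma \ref{lem1}, and then to exploit the sign condition $\beta<-1$ in place of $\beta>-1$. First I would pass from $\ee$ to the capacity. Since $\ee$ is assumed countably sub-additive and $\vv(A)=\inf\{\ee[\xi]:I_A\le\xi,\ \xi\in\HH\}$, for every nonnegative $Y\in\HH$ one has $\ee[Y]\le C_{\vv}(Y)$: approximate $Y$ from above by $\sum_k(c_k-c_{k-1})\xi_k$ with $\xi_k\ge I(Y>c_{k-1})$ and $\ee[\xi_k]$ close to $\vv(Y>c_{k-1})$, apply countable sub-additivity, and let the mesh tend to $0$. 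Applying this with $Y=|X|^{\alpha}I(|X|\ge u^{\gamma})$, after replacing the indicator by a smooth $C_{l,Lip}$ majorant so that $Y\in\HH$, yields $\ee[|X|^{\alpha}I(|X|\ge u^{\gamma})]\le C_{\vv}(|X|^{\alpha}I(|X|\ge u^{\gamma}))$, so it suffices to bound $\int_1^\infty u^{\beta}C_{\vv}(|X|^{\alpha}I(|X|\ge u^{\gamma}))\,\dif u$.

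Next I would expand the Choquet integral exactly as in the proof of Lemma \ref{lem1}, writing
$$C_{\vv}(|X|^{\alpha}I(|X|\ge u^{\gamma}))=u^{\alpha\gamma}\vv(|X|\ge u^{\gamma})+\int_{u^{\gamma}}^{\infty}\alpha t^{\alpha-1}\vv(|X|>t)\,\dif t.$$
Integrating against $u^{\beta}$ over $[1,\infty)$ produces two terms. For the boundary term the substitution $w=u^{\gamma}$ turns $\int_1^\infty u^{\beta+\alpha\gamma}\vv(|X|\ge u^{\gamma})\,\dif u$ into $\gamma^{-1}\int_1^\infty w^{(\beta+1)/\gamma+\alpha-1}\vv(|X|\ge w)\,\dif w\ll C_{\vv}(|X|^{(\beta+1)/\gamma+\alpha})$, which is already the target moment. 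For the tail term, Fubini swaps the order of integration to give $\int_1^\infty\alpha t^{\alpha-1}\vv(|X|>t)\big(\int_1^{t^{1/\gamma}}u^{\beta}\,\dif u\big)\,\dif t$. Part (ii), carrying the extra weight $\ln u$, is handled by the identical computation with the inner integral $\int_1^{t^{1/\gamma}}u^{\beta}\ln(u)\,\dif u$, the logarithm surfacing as $\ln(1+|X|)$ in the final moment.

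The crux, and the place where the hypothesis $\beta<-1$ enters (dually to the use of $\beta>-1$ in Lemma \ref{lem1}), is the inner integral $\int_1^{t^{1/\gamma}}u^{\beta}\,\dif u$: because $\beta+1<0$ this integral converges as $t\to\infty$, whereas for $\beta>-1$ it would diverge like $t^{(\beta+1)/\gamma}$. The hard part I anticipate is converting this convergence into the correct \emph{reduced} power. One wants the tail term to be dominated by $\int_1^\infty t^{\alpha-1+(\beta+1)/\gamma}\vv(|X|>t)\,\dif t\ll C_{\vv}(|X|^{(\beta+1)/\gamma+\alpha})$, i.e. to recover the exponent $(\beta+1)/\gamma+\alpha$ (which is strictly below $\alpha$) rather than the naive exponent $\alpha$; pinning down the inner-integral estimate so that this reduced power emerges, and matching it against the boundary term, is the delicate bookkeeping of the whole argument. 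A secondary technical point is the legitimacy of the passage $\ee\le C_{\vv}$ for the truncated integrand $|X|^{\alpha}I(|X|\ge u^{\gamma})$, which does not lie in $C_{l,Lip}$; this is exactly where the smoothing of the indicator and the standing assumption of countable sub-additivity of $\ee$ are both needed. Case (ii) then follows verbatim with $\ln u$ inserted throughout.
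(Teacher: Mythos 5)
Your opening reduction $\ee[Y]\le C_{\vv}(Y)$ (the paper gets this by citing Lemma 4.5 of Zhang \cite{Zhang2016a}) and your expansion of the Choquet integral are fine, and your boundary term is handled correctly. The genuine gap is precisely the step you deferred as ``delicate bookkeeping'': the tail term cannot be bounded by $C\,C_{\vv}\left(|X|^{(\beta+1)/\gamma+\alpha}\right)$, because for $t\ge 2^{\gamma}$ the inner integral satisfies
$$\int_{1}^{t^{1/\gamma}}u^{\beta}\,\dif u=\frac{1-t^{(\beta+1)/\gamma}}{|\beta+1|}\ \ge\ \frac{1-2^{\beta+1}}{|\beta+1|}>0,$$
a constant bounded away from zero, not a quantity of order $t^{(\beta+1)/\gamma}$. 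A decaying power can only come from a large \emph{lower} limit of integration in the $u$-variable, never from a large upper limit. Consequently your tail term is bounded below by $c\int_{2^{\gamma}}^{\infty}t^{\alpha-1}\vv(|X|>t)\,\dif t$, i.e.\ it genuinely requires the full moment $C_{\vv}(|X|^{\alpha})$, strictly stronger than the asserted $C_{\vv}(|X|^{(\beta+1)/\gamma+\alpha})$. In fact the statement you set out to prove, taken literally with the indicator $I(|X|\ge u^{\gamma})$, is false: take $|X|\equiv M$ constant with $M$ large; the left side of (i) equals $M^{\alpha}\bigl(1-M^{(\beta+1)/\gamma}\bigr)/|\beta+1|\sim M^{\alpha}/|\beta+1|$, while the right side is $C\,M^{\alpha+(\beta+1)/\gamma}=o(M^{\alpha})$, so no constant $C$ can work. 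No bookkeeping will close this; the obstruction is real.

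What the paper's own proof establishes --- and what is actually invoked later, in the bound for $K_{12}$ in the proof of Theorem \ref{thm1}, where the truncation is $I(|X|\le Cn^{1+\beta}i^{-\beta})$ --- is the lemma with the indicator $I(|X|\le u^{\gamma})$; the ``$\ge$'' in the displayed statement is a typo. With truncation from above, $|X|^{\alpha}I(|X|\le u^{\gamma})\le u^{\alpha\gamma}$, so the Choquet integral has no tail part at all:
$$\ee\left[|X|^{\alpha}I(|X|\le u^{\gamma})\right]\le\int_{0}^{u^{\gamma}}\vv(|X|>t)\,\alpha t^{\alpha-1}\,\dif t,$$
and Fubini over the region $\{u\ge 1,\ 0\le t\le u^{\gamma}\}$ produces the inner integral $\int_{1\vee t^{1/\gamma}}^{\infty}u^{\beta}\,\dif u\le C\,t^{(\beta+1)/\gamma}$; here the hypothesis $\beta<-1$ enters to make this integral converge, and the reduced exponent emerges from the lower limit $t^{1/\gamma}$. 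This gives $C\int_{0}^{\infty}t^{\alpha-1+(\beta+1)/\gamma}\vv(|X|>t)\,\dif t\le C\,C_{\vv}\bigl(|X|^{(\beta+1)/\gamma+\alpha}\bigr)$, which is exactly the paper's computation (and part (ii) goes the same way with the factor $\ln u$). So your plan becomes sound only after correcting the statement; as written, the step you flagged as the crux is not merely hard but impossible.
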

	\begin{proof}
		(\expandafter{\romannumeral1}) By Lemma 4.5 in Zhang \cite{Zhang2016a},
		\begin{eqnarray*}
			&&\int_{1}^{\infty}u^{\beta}\ee\left[|Y|^{\alpha}I(|Y|\le u^{\gamma})\right]\dif u\\
			&&\le \int_{1}^{\infty}u^{\beta}\int_{0}^{u^{\gamma}}\vv\left(|Y|I(|Y|\le u^{\gamma})>t\right)\alpha t^{\alpha-1}\dif t\dif u\\
			&&\le C\int_{0}^{\infty}\vv(|Y|>t) t^{\alpha-1}\int_{1\bigvee t^{1/\gamma}}^{\infty}u^{\beta}\dif u\\
			&&\le C\int_{0}^{\infty}\vv(|Y|>t)t^{\alpha-1+(\beta+1)/\gamma}\dif t\le \CC_{\vv}\left(|Y|^{(\beta+1)/\gamma+\alpha}\right).
		\end{eqnarray*}
		(\expandafter{\romannumeral2}) By Lemma 4.5 in Zhang \cite{Zhang2016a} and the proof of Lemma 2.2 in Zhong and Wu \cite{Zhong2017},
		\begin{eqnarray*}
			&&\int_{1}^{\infty}u^{\beta}\ln(u)\ee\left[|Y|^{\alpha}I(|Y|\le u^{\gamma})\right]\dif u\\
			&&\le \int_{1}^{\infty}u^{\beta}\ln(u)\int_{0}^{u^{\gamma}}\vv\left(|Y|I(|Y|\le u^{\gamma}>t)\right)\alpha t^{\alpha-1}\dif t\dif u\\
			&&\le C\int_{0}^{\infty}\vv(|Y|>t) t^{\alpha-1}\int_{1\bigvee t^{1/\gamma}}^{\infty}u^{\beta}\ln(u)\dif u\\
			&&\le C\int_{0}^{\infty}\vv(|Y|>t)t^{\alpha-1+(\beta+1)/\gamma}\ln(t+1)\dif t\le C \CC_{\vv}\left(|Y|^{(\beta+1)/\gamma+\alpha}\ln(1+|Y|)\right).
		\end{eqnarray*}
	\end{proof}
	\begin{lem}\label{lem4} Let $\{X_n;n\ge 1\}$ be a sequence of independent random variables under sublinear expectation space $(\Omega,\HH,\ee)$. Then the condition that for all $x>0$,
		\begin{eqnarray}\label{04*}
			\lim_{n\rightarrow\infty}\vv\left(\max_{1\le j\le n}|a_{nj}X_j|>x\right)=0
		\end{eqnarray}
		implies that there exists constants $C$ such that for all $x>0$, for $n$ large enough, 
		\begin{eqnarray}\label{04}
			&&\left[1-\vv\left(\max_{1\le j\le n}|a_{nj}X_j|>x\right)\right]^2\sum_{j=1}^{n}\vv(|a_{nj}X_j|>x)\le C\vv\left(\max_{1\le j\le n}|a_{nj}X_j|>x\right).
		\end{eqnarray}
	\end{lem}
	\begin{proof}
		We borrow the idea from Shen and Wu \cite{Shen2014}. Write $A_k=(|a_{nk}X_k|>x)$ and
		$$
		\beta_n=1-\vv(\cup_{k=1}^{n}A_k)=1-\vv\left(\max_{1\le j\le n}|a_{nj}X_j|>x\right)
		$$
		Without restriction of generality, we can assume that $\beta_n>0$. Since $\{I(|a_{nk}X_k|>x)-\ee I(|a_{nk}X_k|>x), k\le 1\}$ is a sequence of independent random variables under sublinear expectations, combining $C_r$'s inequality and Lemma \ref{lem2*} results in
		\begin{eqnarray}\label{05}
			&&\nonumber\ee\left[\sum_{k=1}^{n}(I(A_k)-\ee I(A_k))\right]^2\le C\left\{\sum_{k=1}^{n}\ee\left[(I(A_k)-\ee I(A_k))^2\right]+\left[\sum_{k=1}^{n}\vv(A_k)\right]^2\right\}\\
		\nonumber	&&\le C\sum_{k=1}^{n}\ee\left[(I(A_k)+(\vv(A_k)))^2\right]+C\left[\sum_{k=1}^{n}\vv(A_k)\right]^2\\
			&&\le C\left\{\sum_{k=1}^{n}\vv(A_k)+\left[\sum_{k=1}^{n}\vv(A_k)\right]^2\right\}.
		\end{eqnarray}
		By (\ref{05}), independence of $I(A_k), k=1,\ldots, n$, subadditivity of sublinear expectations, and H\"{o}lder's inequality under sublinear expectations, we conclude that
		\begin{eqnarray*}
			\sum_{k=1}^{n}\vv(A_k)&&=\sum_{k=1}^{n}\ee[I(A_k)]=\ee\left[\sum_{k=1}^{n}I(A_k)\right]\\
			&&=\ee\left[\sum_{k=1}^{n}I(A_k)I\left(\bigcup_{j=1}^{n}A_j\right)\right]\\
			&&\le \ee\left[\sum_{k=1}^{n}(I(A_k)-\ee I(A_k))I\left(\bigcup_{j=1}^{n}A_j\right)\right]+\sum_{k=1}^{n}\vv(A_k)\vv\left(\cup_{j=1}^{n}A_j\right)\\
			&&\le \left[\ee(\sum_{k=1}^{n}(I(A_k)-\ee I(A_k)))^2 \ee\left(I\left(\bigcup_{j=1}^{n}A_j\right)\right)\right]^{1/2}+(1-\beta_n)\sum_{k=1}^{n}\vv(A_k)\\
		&&\le \left\{C(1-\beta_n)\left[\left(\sum_{k=1}^{n}\vv(A_k)\right)+\left(\sum_{k=1}^{n}\vv(A_k)\right)^2\right]\right\}^{1/2}+(1-\beta_n)\sum_{k=1}^{n}\vv(A_k)\\
			&&\le C(1-\beta_n)^{1/2}\sum_{k=1}^{n}\vv(A_k)+\frac12\left[\frac{C(1-\beta_n)}{\beta_n}+\beta_n\sum_{k=1}^{n}\vv(A_k)\right]+(1-\beta_n)\sum_{k=1}^{n}\vv(A_k),
		\end{eqnarray*}
		which combined with (\ref{04*}) results in (\ref{04}) immediately. The proof is finished.
	\end{proof}
	\section{Main results}
	Thoughout the rest of this paper, we assume that $\{X_n,n\ge 1\}$ is a sequence of independent random variables, identically distributed as $X$ under sublinear expectation space  $(\Omega,\HH,\ee)$ with $\ee(X_i)=-\ee(-X_i)= 0$, $i=1,2,\ldots$.
	Our main results are as follows.
	\begin{thm}\label{thm1} Let $\beta>-1$, $r>1$. Let $\{b_{ni}\approx (i/n)^{\beta}(1/n), 1\le i\le n, n\ge 1\}$ satisfying $\sum_{i=1}^{n}b_{ni}=1$ for all $n\ge 1$ hold. Let for $r>1$,
		\begin{eqnarray}\label{06}
			\begin{cases} \CC_{\vv}\left(|X|^{(r-1)/(1+\beta)}\right)<\infty,& \text{
					for $-1<\beta<-1/r$;}\\
				\CC_{\vv}\left(|X|^r\ln(1+|X|)\right)<\infty, & \text{ for $\beta=-1/r$;}\\
				\CC_{\vv}\left(|X|^r\right)<\infty, & \text{ for $\beta>-1/r$,}
			\end{cases}
		\end{eqnarray}
		hold. Then for all $\varepsilon>0$,
		\begin{equation}\label{07}
			\sum_{n=1}^{\infty}n^{r-2}\ee\left\{\left(\max_{1\le j\le n}\left|\sum_{i=1}^{j}b_{ni}X_{i}\right|-\varepsilon\right)^{+}\right\}<\infty,
		\end{equation}
		\begin{equation}\label{08}
			\sum_{n=1}^{\infty}n^{r-2}\vv\left\{\max_{1\le j\le n}\left|\sum_{i=1}^{j}b_{ni}X_{i}\right|>\varepsilon\right\}<\infty.
		\end{equation}
		(\ref{07}) also implies (\ref{06}).
	\end{thm}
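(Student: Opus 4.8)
Throughout write $S'_j=\sum_{i=1}^{j}a_{ni}X_i$ and $\eta_n=\max_{1\le j\le n}|S'_j|$, so that (\ref{07}) reads $\sum_n n^{r-2}\ee[(\eta_n-\varepsilon)^+]<\infty$ and (\ref{08}) reads $\sum_n n^{r-2}\vv(\eta_n>\varepsilon)<\infty$. The plan is to treat the chain $(\ref{06})\Rightarrow(\ref{07})\Rightarrow(\ref{08})$ and the converse $(\ref{07})\Rightarrow(\ref{06})$ separately. The implication $(\ref{07})\Rightarrow(\ref{08})$ is immediate: pointwise $(\eta_n-\varepsilon)^+\ge\varepsilon\,I(\eta_n>2\varepsilon)$, and since $(\eta_n-\varepsilon)^+/\varepsilon\in\HH$ majorizes $I(\eta_n>2\varepsilon)$, the definition of $\vv$ gives $\ee[(\eta_n-\varepsilon)^+]\ge\varepsilon\,\vv(\eta_n>2\varepsilon)$; multiplying by $n^{r-2}$, summing, and using that $\varepsilon>0$ is arbitrary yields (\ref{08}). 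Hence the substance is $(\ref{06})\Rightarrow(\ref{07})$.

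For $(\ref{06})\Rightarrow(\ref{07})$ I would first pass from the moment to a tail integral via the layer-cake bound $\ee[(\eta_n-\varepsilon)^+]\le\int_0^\infty\vv(\eta_n>\varepsilon+t)\,\dif t$, legitimate because $\ee[Y]\le C_{\vv}(Y)$ for $Y\ge0$ and $\ee$ is countably sub-additive, so it suffices to show $\sum_n n^{r-2}\int_0^\infty\vv(\eta_n>\varepsilon+t)\,\dif t<\infty$. I would then truncate at a level proportional to the weights: fix small $\delta>0$, put $\bar X_{ni}=(-\delta(\varepsilon+t)/a_{ni})\vee X_i\wedge(\delta(\varepsilon+t)/a_{ni})$, and split
\begin{equation*}
\vv(\eta_n>\varepsilon+t)\le\sum_{i=1}^{n}\vv\!\big(a_{ni}|X_i|>\delta(\varepsilon+t)\big)+\vv\!\Big(\max_{1\le j\le n}\big|\textstyle\sum_{i=1}^{j}a_{ni}\bar X_{ni}\big|>\varepsilon+t\Big),
\end{equation*}
the first (tail) term coming from sub-additivity of $\vv$ on $\{\max_i a_{ni}|X_i|>\delta(\varepsilon+t)\}$, the second (central) term from the complementary event, on which $S'_j$ coincides with the truncated sum.

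For the tail term I would use identical distribution and $a_{ni}\approx(i/n)^\beta(1/n)$ to rewrite $\{a_{ni}|X_i|>\delta(\varepsilon+t)\}$ as $\{|X|>c\,n^{1+\beta}i^{-\beta}(\varepsilon+t)\}$, interchange the $t$-integration with the sums over $i$ and $n$, compare $\sum_i(\cdot)$ with $\int_0^1 x^{\beta}(\cdot)\,\dif x$, and apply Lemma \ref{lem1} to produce a Choquet moment of $|X|$; the boundary $\beta=-1/r$ is exactly where the series in $n$ is logarithmically critical, which is why the $\ln$-weighted part (ii) of Lemma \ref{lem1} generates the factor $\ln(1+|X|)$ and why the three regimes of (\ref{06}) --- orders $(r+1)/(1+\beta)$, $r$ with a logarithm, and $r$ --- appear as stated. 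For the central term I would first center: since $\ee(X)=\ee(-X)=0$, sub-additivity gives $|\sum_i a_{ni}\ee[\bar X_{ni}]|\le\sum_i a_{ni}\ee[|X|I(|X|>\delta(\varepsilon+t)/a_{ni})]$, which I would show is at most $(\varepsilon+t)/2$ for all relevant $n,t$, leaving the centered maximum with threshold $\gtrsim\varepsilon+t$; then Markov's inequality at an even power $p\ge2$ and the Rosenthal-type bound (\ref{02}) of Lemma \ref{lem2} control it by $C(\varepsilon+t)^{-p}\{\sum_i\ee|a_{ni}\bar X_{ni}|^p+(\sum_i\ee|a_{ni}\bar X_{ni}|^2)^{p/2}\}$, and summing over $n$ and integrating over $t$ reduces every term to a finite Choquet moment through Lemmas \ref{lem1} and \ref{lem3} (the latter covering the exponent $\beta<-1$ produced by the truncated-from-above contributions). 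I expect the main obstacle to be the bookkeeping in this central term: one must choose $p$ large enough in terms of $r$ and $\beta$ and verify, separately in each of the three $\beta$-regimes, that both the $p$-th-moment sum and the variance block $(\sum_i\ee|a_{ni}\bar X_{ni}|^2)^{p/2}$ convert, after the $t$-integration, into Choquet moments of the orders in (\ref{06}); the non-uniform $i$-dependence of $a_{ni}\approx(i/n)^\beta(1/n)$ forces these inner sums to be estimated by integrals whose convergence is governed by the sign of $\beta+1/r$.

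Finally, for the converse $(\ref{07})\Rightarrow(\ref{06})$ I would run the tail computation backwards. From (\ref{07}) we already have (\ref{08}), and since $a_{nj}|X_j|=|S'_j-S'_{j-1}|\le2\eta_n$ gives $\{\max_j a_{nj}|X_j|>x\}\subseteq\{\eta_n>x/2\}$, applying Lemma \ref{lem4} to the independent array $\{a_{nj}X_j\}_j$ --- whose prefactor $[1-\vv(\max_j a_{nj}|X_j|>x)]^2$ is bounded below by $1/2$ once the maximum-tail is small --- yields the lower bound $\sum_j\vv(a_{nj}|X_j|>x)\le 8\,\vv(\eta_n>x/2)$. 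Multiplying by $n^{r-2}$, summing, and reversing the Lemma \ref{lem1} computation of the tail term then recovers (\ref{06}), the only delicate point being to secure $\vv(\max_j a_{nj}|X_j|>x)\le1/2$ on the range of $(n,x)$ needed to reconstruct the moment, which follows once (\ref{08}) forces $\vv(\eta_n>\varepsilon)\to0$ at the relevant scale.
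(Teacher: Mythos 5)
Your proposal is correct and follows essentially the same route as the paper: truncation at the scale of $1/a_{ni}$, Markov's inequality at a large power $p$ combined with the Rosenthal-type bound of Lemma \ref{lem2} for the centered truncated part, Lemmas \ref{lem1} and \ref{lem3} to convert the resulting series into the three Choquet moments of (\ref{06}), and, for the converse, the chain (\ref{07})$\Rightarrow$(\ref{08}) followed by the bound of the maximal single summand, Lemma \ref{lem4}, and reversal of the tail-series computation. The only organizational difference is that you apply the layer-cake bound first and truncate at the $t$-dependent level $\delta(\varepsilon+t)/a_{ni}$ — which forces your extra centering estimate $\left|\sum_{i}a_{ni}\ee[\bar X_{ni}]\right|\le(\varepsilon+t)/2$ — whereas the paper truncates once at the fixed level $1/a_{ni}$ and splits the moment $\ee\left(\max_{1\le j\le n}\left|\sum_{i=1}^{j}a_{ni}X_i\right|-\varepsilon\right)^{+}$ pointwise into three series $L_1$ (truncated first moments), $L_2$ (tail capacities) and $L_3$ (the centered maximum), only the last of which needs the tail integral and Rosenthal's inequality; both bookkeepings end in the same three-regime analysis in $\beta$.
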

	\begin{cor}\label{cor1} Let $\beta>-1$, $r>1$. Assume that $\{b_{ni}\approx[(n-i)/n]^{\beta}(1/n), 0\le i\le n-1, n\ge 1\}$ satisfying $\sum_{i=0}^{n-1}b_{ni}=1$ for all $n\ge 1$. Let (\ref{06}) hold. Then for all $\varepsilon>0$,
		\begin{equation}\label{09}
			\sum_{n=1}^{\infty}n^{r-2}\ee\left\{\left(\max_{0\le j\le n-1}\left|\sum_{i=0}^{j}b_{ni}X_{i}\right|-\varepsilon\right)^{+}\right\}<\infty,
		\end{equation}
		\begin{equation}\label{10}
			\sum_{n=1}^{\infty}n^{r-2}\vv\left\{\max_{0\le j\le n-1}\left|\sum_{i=0}^{j}b_{ni}X_{i}\right|>\varepsilon\right\}<\infty.
		\end{equation}
		(\ref{09}) also  implies (\ref{06}).
	\end{cor}
	For $\alpha>0$, write Ces\`{a}ro summation
	\begin{equation}\label{11}
		A_n^{\alpha}:=\frac{(\alpha+1)(\alpha+2)\cdots(\alpha+n)}{n!}, n=1,2,\ldots, \mbox{   $A_0^{\alpha}=1$.}
	\end{equation}
	\begin{thm}\label{thm2} Let $0<\alpha\le 1$, $r\ge 1$. Let
		\begin{eqnarray}\label{12}
			\begin{cases} \CC_{\vv}\left(|X|^{(r-1)/\alpha}\right)<\infty,& \text{
					for $0<\alpha<1-1/r$;}\\
				\CC_{\vv}\left(|X|^r\ln(1+|X|)\right)<\infty, & \text{ for $\alpha=1-1/r$;}\\
				\CC_{\vv}\left(|X|^r\right)<\infty, & \text{ for $\alpha>1-1/r$}
			\end{cases}
		\end{eqnarray}
		hold. Then for all $\varepsilon>0$,
		\begin{equation}\label{13}
			\sum_{n=1}^{\infty}n^{r-2}\ee\left\{\left(\max_{0\le j\le n}\left|\sum_{i=0}^{j}A_{n-i}^{\alpha-1}X_{i}/A_{n}^{\alpha}\right|-\varepsilon\right)^{+}\right\}<\infty,
		\end{equation}
		\begin{equation}\label{14}
			\sum_{n=1}^{\infty}n^{r-2}\vv\left\{\max_{0\le j\le n}\left|\sum_{i=0}^{j}A_{n-i}^{\alpha-1}X_{i}/A_{n}^{\alpha}\right|>\varepsilon\right\}<\infty.
		\end{equation}
		(\ref{13}) also implies (\ref{12}).
	\end{thm}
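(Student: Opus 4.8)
The plan is to deduce the theorem from Corollary \ref{cor1} by recognising the Ces\`aro coefficients as power weights of exponent $\beta=\alpha-1$. Put $b_{ni}:=A_{n-i}^{\alpha-1}/A_{n}^{\alpha}$ for $0\le i\le n$. Writing $A_{m}^{\alpha}=\Gamma(m+\alpha+1)/(\Gamma(\alpha+1)\,m!)$ and using Stirling's formula yields the two-sided estimates $A_{n}^{\alpha}\approx n^{\alpha}$ and $A_{m}^{\alpha-1}\approx m^{\alpha-1}$ for $m\ge1$, whence $b_{ni}\approx n^{-1}\big((n-i)/n\big)^{\alpha-1}$ for $0\le i\le n-1$. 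Because $0<\alpha\le1$ gives $-1<\beta=\alpha-1\le0$, and the critical value $\alpha=1-1/r$ is exactly $\beta=-1/r$, the three regimes of the hypothesis \eqref{12} are exactly the three regimes of \eqref{06} under this substitution, so $\{b_{ni}\}$ is of the type handled by Corollary \ref{cor1}. I would also record the Ces\`aro identity $\sum_{k=0}^{n}A_{k}^{\alpha-1}=A_{n}^{\alpha}$, read off by comparing coefficients in $(1-x)^{-\alpha}(1-x)^{-1}=(1-x)^{-(\alpha+1)}$, which gives the exact normalisation $\sum_{i=0}^{n}b_{ni}=1$.

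Two mismatches with Corollary \ref{cor1} must be reconciled: the extra index $i=n$, where $n-i=0$ and the power law degenerates (here $b_{nn}=1/A_{n}^{\alpha}\approx n^{-\alpha}$), and the extra running index $j=n$ in the maximum. To absorb the first I would renormalise the bulk weights as $\hat a_{ni}:=b_{ni}/(1-1/A_{n}^{\alpha})$ for $0\le i\le n-1$, so that $\sum_{i=0}^{n-1}\hat a_{ni}=1$ while still $\hat a_{ni}\approx\big((n-i)/n\big)^{\alpha-1}(1/n)$ since $1-1/A_{n}^{\alpha}\to1$. For $0\le j\le n-1$ one has $\sum_{i=0}^{j}b_{ni}X_{i}=(1-1/A_{n}^{\alpha})\sum_{i=0}^{j}\hat a_{ni}X_{i}$ with $0<1-1/A_{n}^{\alpha}<1$, and peeling off the last summand gives
$$
\max_{0\le j\le n}\Big|\sum_{i=0}^{j}b_{ni}X_{i}\Big|\le\max_{0\le j\le n-1}\Big|\sum_{i=0}^{j}\hat a_{ni}X_{i}\Big|+\frac{|X_{n}|}{A_{n}^{\alpha}}.
$$
Combining this with the elementary inequality $(u+v-\varepsilon)^{+}\le(u-\varepsilon/2)^{+}+(v-\varepsilon/2)^{+}$ for $u,v\ge0$ and with the monotonicity and subadditivity of $\ee$ splits the summand of \eqref{13} into a bulk part and an endpoint part.

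The bulk part $\sum_{n}n^{r-2}\ee\{(\max_{0\le j\le n-1}|\sum_{i=0}^{j}\hat a_{ni}X_{i}|-\varepsilon/2)^{+}\}$ is finite by Corollary \ref{cor1} applied to $\{\hat a_{ni}\}$ with $\beta=\alpha-1$, since \eqref{12} is then exactly its hypothesis. For the endpoint part, identical distribution gives $\ee\{(|X_{n}|/A_{n}^{\alpha}-\varepsilon/2)^{+}\}=\ee\{(|X|/A_{n}^{\alpha}-\varepsilon/2)^{+}\}$, and a Choquet-layer computation reduces $\sum_{n}n^{r-2}\ee\{(cn^{-\alpha}|X|-\varepsilon/2)^{+}\}$ to a constant multiple of $\int_{0}^{\infty}s^{(r-1)/\alpha-1}\vv(|X|>s)\,\dif s\approx C_{\vv}(|X|^{(r-1)/\alpha})$, finite under \eqref{12}; the critical and super-critical regimes are disposed of the same way through Lemma \ref{lem3}. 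This establishes \eqref{13}, and \eqref{14} follows either from the capacity statement in Corollary \ref{cor1} by the same splitting or directly because \eqref{13} dominates \eqref{14}. For the converse, \eqref{13} forces the bulk series in $\{\hat a_{ni}\}$ to converge (the two differ only by the bounded factor $1-1/A_{n}^{\alpha}$ and the summable endpoint term), whence the converse half of Corollary \ref{cor1} returns \eqref{12}.

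The step I expect to be the main obstacle is the non-uniformity of the approximation $A_{n-i}^{\alpha-1}\approx(n-i)^{\alpha-1}$ near $i=n$: the $O(1)$ heaviest coefficients, all of size $\approx n^{-\alpha}$, i.e.\ at the very scale $n^{-(1+\beta)}$ that fixes the necessary moment, have to be shown not to break the hypotheses, and this is exactly where \eqref{12} enters through Lemma \ref{lem3}. A secondary nuisance is the endpoint $r=1$, which is excluded from Corollary \ref{cor1} ($r>1$); there only the super-critical alternative $C_{\vv}(|X|)<\infty$ remains and must be treated directly by the truncation-and-Rosenthal scheme behind Theorem \ref{thm1}, using Lemmas \ref{lem1}, \ref{lem2} and \ref{lem3}.
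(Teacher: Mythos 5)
Your proposal is the same reduction the paper itself uses: identify $A_{n-i}^{\alpha-1}/A_{n}^{\alpha}$ with the power weights of Corollary \ref{cor1} via $A_{n}^{\alpha}\approx n^{\alpha}$, $\beta=\alpha-1\in(-1,0]$, and the identity $\sum_{i=0}^{n}A_{n-i}^{\alpha-1}=A_{n}^{\alpha}$. In fact you are more careful than the paper, whose entire proof is this identification followed by the sentence that ``the assumptions of Corollary \ref{cor1} hold'': the paper never addresses the index mismatch ($0\le i\le n$ and $\max_{0\le j\le n}$ in Theorem \ref{thm2} versus $0\le i\le n-1$ and $\max_{0\le j\le n-1}$ in Corollary \ref{cor1}) nor the degenerate endpoint coefficient $a_{nn}=1/A_{n}^{\alpha}\approx n^{-\alpha}$, which falls outside the pattern $[(n-i)/n]^{\beta}(1/n)$. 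Your renormalisation $\hat a_{ni}$, the splitting $(u+v-\varepsilon)^{+}\le(u-\varepsilon/2)^{+}+(v-\varepsilon/2)^{+}$, and the separate Choquet estimate for $|X_n|/A_{n}^{\alpha}$ repair exactly this omission.

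Two caveats, both defects you inherit from, rather than introduce into, the paper. First, your claim that under $\beta=\alpha-1$ condition (\ref{12}) ``is exactly'' condition (\ref{06}) is not true of the paper as written: in the subcritical regime (\ref{06}) reads $C_{\vv}(|X|^{(r+1)/(1+\beta)})=C_{\vv}(|X|^{(r+1)/\alpha})<\infty$, whereas (\ref{12}) supplies only the weaker $C_{\vv}(|X|^{(r-1)/\alpha})<\infty$; so, taken literally, (\ref{12}) does not imply the hypothesis of Corollary \ref{cor1} when $0<\alpha<1-1/r$, and your bulk step (like the paper's whole proof) is formally broken there. The exponent $(r+1)$ in (\ref{06}) appears to be a typo for $(r-1)$ --- the $L_1$ and $L_2$ bounds and the converse step in the proof of Theorem \ref{thm1} all produce $(r-1)/(1+\beta)$, and your own endpoint calculation pins the necessary moment at $(r-1)/\alpha$ --- but a rigorous write-up must say so explicitly, or rerun those estimates with the corrected exponent, rather than assert the hypotheses coincide. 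Second, your treatment of $r=1$ is only a gesture: the truncation-and-Rosenthal scheme behind Theorem \ref{thm1} genuinely requires $r>1$ (for instance the $K_{21}$ step needs $r-2-(r-1)p/2<-1$, which no choice of $p$ achieves at $r=1$, where the bound degenerates to $\sum_n n^{-1}=\infty$), so the case $r=1$, which is of Spitzer type, cannot be handled by ``the same scheme''; here too the paper has the identical unacknowledged gap, since Theorem \ref{thm2} asserts $r\ge1$ while Corollary \ref{cor1} assumes $r>1$.
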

	
	In Theorem \ref{thm2}, taking $\alpha=1$, we get the following corollary.
	\begin{cor}\label{cor2} Let $r>1$. Suppose $\CC_{\vv}(|X|^r)<\infty$. Then for all $\varepsilon>0$,
		\begin{equation}\label{15}
			\sum_{n=1}^{\infty}n^{r-2}\ee\left\{\left(\max_{1\le j\le n}\left|\frac{1}{n}\sum_{i=1}^{j}X_{i}\right|-\varepsilon\right)^{+}\right\}<\infty,
		\end{equation}
		\begin{equation}\label{16}
			\sum_{n=1}^{\infty}n^{r-2}\vv\left\{\max_{1\le j\le n}\left|\frac{1}{n}\sum_{i=1}^{j}X_{i}\right|>\varepsilon\right\}<\infty.
		\end{equation}
		(\ref{15}) also concludes $\CC_{\vv}(|X|^r)<\infty$.
	\end{cor}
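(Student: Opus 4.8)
The plan is to obtain the corollary as the special case $\alpha=1$ of Theorem \ref{thm2}, exactly as flagged in Remark \ref{rmk1}. First I would evaluate the Ces\`{a}ro coefficients at $\alpha=1$: from (\ref{11}) one gets $A_n^{1}=(2\cdot3\cdots(n+1))/n!=n+1$ and $A_n^{0}=(1\cdot2\cdots n)/n!=1$ (with $A_0^{0}=1$), so that $A_{n-i}^{\alpha-1}=A_{n-i}^{0}=1$ for every $0\le i\le n$ while $A_n^{\alpha}=A_n^{1}=n+1$. Hence the normalized weighted sum appearing in (\ref{13})--(\ref{14}) collapses to $\frac{1}{n+1}\sum_{i=0}^{j}X_i$, the uniform-weight average up to the harmless replacement of $n$ by $n+1$.

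Next I would check that the hypotheses line up. Since $r>1$ gives $1-1/r<1=\alpha$, the value $\alpha=1$ lands in the third branch $\alpha>1-1/r$ of (\ref{12}), whose requirement is precisely $C_{\vv}(|X|^r)<\infty$. Thus the assumption of the corollary coincides with that of Theorem \ref{thm2} for $\alpha=1$, and the theorem immediately supplies, for every $\varepsilon>0$, the convergence of $\sum_{n}n^{r-2}\ee\{(\max_{0\le j\le n}|\frac{1}{n+1}\sum_{i=0}^{j}X_i|-\varepsilon)^{+}\}$ and of the matching capacity series.

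The only remaining point, and the one requiring genuine care, is to pass from this form to the exact statements (\ref{15})--(\ref{16}), which begin the inner sum at $i=1$ and normalize by $n$ rather than $n+1$. Because the variables are identically distributed, I would relabel $Y_i:=X_{i+1}$, so that $\frac{1}{n+1}\sum_{i=0}^{j}Y_i=\frac{1}{n+1}\sum_{k=1}^{j+1}X_k$ and $\max_{0\le j\le n}|\frac{1}{n+1}\sum_{i=0}^{j}Y_i|=\frac{1}{n+1}\max_{1\le k\le n+1}|S_k|$. The elementary bound $\frac{1}{n}\max_{1\le j\le n}|S_j|\le2\cdot\frac{1}{n+1}\max_{1\le k\le n+1}|S_k|$ (using $(n+1)/n\le2$), together with the identity $(2W-\varepsilon)^{+}=2(W-\varepsilon/2)^{+}$ and the positive homogeneity and monotonicity of $\ee$, then bounds the left-hand side of (\ref{15}) by twice the series from Theorem \ref{thm2} evaluated at $\varepsilon/2$, which is finite; the parallel event inclusion $\{\frac{1}{n}\max_{1\le j\le n}|S_j|>\varepsilon\}\subseteq\{\frac{1}{n+1}\max_{1\le k\le n+1}|S_k|>\varepsilon/2\}$ gives (\ref{16}). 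The converse I would treat the same way: (\ref{15}) forces the $\alpha=1$ series for $\{Y_i\}$ to converge, and the converse half of Theorem \ref{thm2} returns (\ref{12}) with $\alpha=1$, i.e. $C_{\vv}(|X|^r)<\infty$. I expect no substantive obstacle, since all the analytic content is already carried by Theorem \ref{thm2}; the work lies entirely in reconciling the two indexing conventions.
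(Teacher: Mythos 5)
Your proof is correct and takes the same route as the paper: the paper obtains Corollary \ref{cor2} simply by setting $\alpha=1$ in Theorem \ref{thm2}, as flagged in Remark \ref{rmk1}, with no further detail given. Your reconciliation of the indexing mismatch (sums from $0$ to $n$ normalized by $n+1$ versus sums from $1$ to $n$ normalized by $n$) via the factor-of-two bound and the substitution $\varepsilon\mapsto\varepsilon/2$ is exactly the bookkeeping the paper leaves implicit, and it is carried out correctly.
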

	
	\begin{rmk}\label{rmk2} Under the same assumptions of Theorem \ref{thm1}, we obtain for all $\varepsilon>0$,
		\begin{eqnarray}\label{17}
			\nonumber\infty&&>\sum_{n=1}^{\infty}n^{r-2}\ee\left\{\left(\max_{1\le j\le n}\left|\sum_{i=1}^{j}b_{ni}X_{i}\right|-\varepsilon/2\right)^{+}\right\}\\
			&&\nonumber\ge\sum_{n=1}^{\infty}2n^{r-2}\vv\left(\max_{1\le j\le n}\left|\sum_{i=1}^{j}b_{ni}X_{i}\right|>\varepsilon\right)/\varepsilon\\
			&&=(2/\varepsilon)\sum_{n=1}^{\infty}n^{r-2}\vv\left(\max_{1\le j\le n}\left|\sum_{i=1}^{j}b_{ni}X_{i}\right|>\varepsilon\right).
		\end{eqnarray}
		By (\ref{17}), we can deduce that (\ref{07}) implies (\ref{08}). Similarly, (\ref{09}) implies (\ref{10}). Hence we complement the results of Meng et al. \cite{Meng2019} to those under sublinear expectations.
	\end{rmk}

	\section{Proofs of Theorems \ref{thm1},\ref{thm2}}
	
	\begin{proof}[Proof of Theorem \ref{thm1}] Here we borrow the idea of proofs of Theorem 16 in Meng et al. \cite{Meng2019}. We first prove that (\ref{06}) implies (\ref{07}) . For all $1\le i\le n$, $n\ge 1$, write
		$$
		Y_{ni}=-\frac{1}{b_{ni}}I(b_{ni}X_i<-1)+X_{i}I(|b_{ni}X_i|\le 1)+\frac{1}{b_{ni}}I(b_{ni}X_i>1).
		$$
		Since $\ee(X_i)=-\ee(-X_i)= 0$, we conclude that
		\begin{eqnarray*}
			\sum_{i=1}^{j}b_{ni}X_i&&\le \sum_{i=1}^{j}b_{ni}(Y_{ni}-\ee Y_{ni})+\sum_{i=1}^{j}\left[|b_{ni}X_i|I(|b_{ni}X_i|>1)+\ee|b_{ni}X_i|I(|b_{ni}X_i|>1)\right] \\
			&&+\sum_{i=1}^{j}\left[I(b_{ni}X_i<-1)+I(b_{ni}X_i>1)\right]+\sum_{i=1}^{j}\vv(|b_{ni}X_i|>1),
		\end{eqnarray*}
		and
		\begin{eqnarray}\label{18}
			\nonumber\max_{1\le j\le n}\left|\sum_{i=1}^{j}b_{ni}X_i\right|&&\le \max_{1\le j\le n}\left|\sum_{i=1}^{j}b_{ni}(Y_{ni}-\ee Y_{ni})\right|+\sum_{i=1}^{n}[I(|b_{ni}X_i|>1)+\vv(|b_{ni}X_i|>1)]\\
			&&+\sum_{i=1}^{n}\left[|b_{ni}X_i|I(|b_{ni}X_i|>1)+\ee|b_{ni}X_i|I(|b_{ni}X_i|>1)\right].
		\end{eqnarray}
		Then
		\begin{eqnarray*}
			\sum_{n=1}^{\infty}n^{r-2}\ee\left(\max_{1\le j\le n}\left|\sum_{i=1}^{j}b_{ni}X_i\right|-\varepsilon\right)^{+}\le && C\sum_{n=1}^{\infty}n^{r-2}\ee\left[\max_{1\le j\le n}\left|\sum_{i=1}^{j}b_{ni}(Y_{ni}-\ee Y_{ni})\right|-\varepsilon\right]^{+} \\
			&&+C\sum_{n=1}^{\infty}n^{r-2}\sum_{i=1}^{n}\vv(|b_{ni}X_i|>1)]\\
			&&+C\sum_{n=1}^{\infty}n^{r-2}\sum_{i=1}^{n}\ee|b_{ni}X_i|I(|b_{ni}X_i|>1)\\
			=:&&\Rmnum{1}+\Rmnum{2}+\Rmnum{3}.
		\end{eqnarray*}
		Thus, to prove (\ref{07}), we need to establish $\Rmnum{1}<\infty$, $\Rmnum{2}<\infty$ and $\Rmnum{3}<\infty$.
		We first prove $\Rmnum{1}<\infty$.
		For fixed $n\ge 1$, since $\{Y_{ni}-\ee Y_{ni},1\le i\le n\}$ is a sequence of independent, identically distributed random variables under sublinear expectation space $(\Omega,\HH,\ee)$, combining Lemma \ref{lem2*}, $C_r$'s inequality, Markov's inequality and Jensen's inequality under sublinear expectations results in
		\begin{eqnarray*}
			\Rmnum{1}&\le&C\sum_{n=1}^{\infty}n^{r-2}\int_{0}^{\infty}\vv\left(\max_{1\le j\le n}\left|\sum_{i=1}^{j}b_{ni}(Y_{ni}-\ee Y_{ni})\right|>t+\varepsilon\right)\dif t\\
			&\le&C\sum_{n=1}^{\infty}n^{r-2} \int_{0}^{\infty}\frac{1}{(t+\varepsilon)^M}\ee\left[\max_{1\le j\le n}\left|\sum_{i=1}^{j}b_{ni}(Y_{ni}-\ee Y_{ni})\right|^M\right]\dif t\\
			&\le& C\sum_{n=1}^{\infty}n^{r-2}\left[\sum_{i=1}^{n}\ee(b_{ni}^M|Y_{ni}-\ee Y_{ni}|^M)+\left(\sum_{i=1}^{n}b_{ni}^2\ee|Y_{ni}-\ee Y_{ni}|^2\right)^{M/2}\right.\\
			&&\left.+\left(\sum_{i=1}^{n}|\ee(b_{ni}Y_{ni})|+|\ee(-b_{ni}Y_{ni})|\right)^M\right]\\
			&\le& C\sum_{n=1}^{\infty}n^{r-2}\sum_{i=1}^{n}\ee(b_{ni}^M|Y_{ni}|^M)+C\sum_{n=1}^{\infty}n^{r-2}\left(\sum_{i=1}^{n}\ee(|b_{ni}Y_{ni}|^2)\right)^{M/2}\\
			&&+C\sum_{n=1}^{\infty}n^{r-2}\left(\sum_{i=1}^{n}|\ee(b_{ni}Y_{ni})|+|\ee(-b_{ni}Y_{ni})|\right)^M=:\Rmnum{1}_1+\Rmnum{1}_2+\Rmnum{1}_3.
		\end{eqnarray*}
		\begin{eqnarray}\label{19}
			\nonumber \Rmnum{1}_1&\le&C\sum_{n=1}^{\infty}n^{r-2}\sum_{i=1}^{n}\vv\left(|b_{ni}X|>1\right)+C\sum_{n=1}^{\infty}n^{r-2}\sum_{i=1}^{n}\ee|b_{ni}X|^MI\left(|b_{ni}X|\le 1\right)=:\Rmnum{1}_{11}+\Rmnum{1}_{12}
		\end{eqnarray}
		By $b_{ni}\approx (i/n)^{\beta}(1/n)$, Lemma \ref{lem1} and (\ref{06}), we see that
		\begin{eqnarray}\label{20}
			\nonumber \Rmnum{1}_{11}&\le&C\sum_{n=1}^{\infty}n^{r-2}\sum_{i=1}^{n}\vv\left(|X_i|>Cn^{1+\beta}i^{-\beta}\right)\\
			\nonumber&\le& C\int_{1}^{\infty}x^{r-2}\int_{1}^{x}\vv\left(|X|>Cx^{1+\beta}y^{-\beta}\right)\dif y\dif x\\
			%&\le& C\int_{1}^{\infty}\dif s\int_{1}^{s}s^{\frac{r-2-\beta}{1+\beta}}t^{\frac{\beta}{1+\beta}(r-1)}\vv\left(|X|>cs\right)\dif t
			\nonumber&&\mbox{    ( Setting $s=x^{1+\beta}y^{-\beta}$, $t=y$ )}\\
			\nonumber&\approx&\begin{cases} C\int_{1}^{\infty}s^{\frac{r-1}{1+\beta}-1}\vv\left(|X|>cs\right)\dif s,& \text{
					for $-1<\beta<-1/r$;}\\
				C\int_{1}^{\infty}s^{r-1}\ln(s)\vv\left(|X|>cs\right)\dif s, & \text{ for $\beta=-1/r$;}\\
				C\int_{1}^{\infty}s^{r-1}\vv\left(|X|>cs\right)\dif s, & \text{ for $\beta>-1/r$;}
			\end{cases}\\
			&\approx& \begin{cases} C \CC_{\vv}\left(|X|^{(r+1)/(1+\beta)}\right)<\infty,& \text{
					for $-1<\beta<-1/r$;}\\
				C\CC_{\vv}\left(|X|^r\ln(1+|X|)\right)<\infty, & \text{ for $\beta=-1/r$;}\\
				C\CC_{\vv}\left(|X|^r\right)<\infty, & \text{ for $\beta>-1/r$.}
			\end{cases}
		\end{eqnarray}
		Taking $M$ large enough satisfying $(r-1)/(1+\beta)-1-M<-1$, $r-1-M<-1$, combining Lemma \ref{lem3} and (\ref{06}) results in
		\begin{eqnarray*}
			\Rmnum{1}_{12}&=&C\sum_{n=1}^{\infty}n^{r-2}\sum_{i=1}^{n}n^{-M(1+\beta)}i^{M\beta}\ee\left(|X|^MI(|X|\le Cn^{1+\beta}i^{-\beta})\right)\\
			&\approx&C\int_{1}^{\infty}x^{r-2}\int_{1}^{x}x^{-M(1+\beta)}y^{M\beta}\ee\left(|X|^MI(|X|\le Cx^{1+\beta}y^{-\beta})\right)\dif y\dif x\\
			%&\le&C\int_{1}^{\infty}\dif s\int_{1}^{s}s^{\frac{r-2-\beta}{1+\beta}-M}t^{\frac{\beta}{1+\beta}(r-1)}\CC_{\vv}\left(|X|^MI(|X|\le cs)\right)\dif t
			&&\mbox{     ( Setting  $s=x^{1+\beta}y^{-\beta}$, $t=y$ )}\\
			&\approx&\begin{cases} C\int_{1}^{\infty}s^{\frac{r-1}{1+\beta}-1-M}\CC_{\vv}\left(|X|^MI(|X|>cs)\right)\dif s,& \text{
					for $-1<\beta<-1/r$;}\\
				C\int_{1}^{\infty}s^{r-1-M}\ln(s)\CC_{\vv}\left(|X|^MI(|X|>cs)\right)\dif s, & \text{ for $\beta=-1/r$;}\\
				C\int_{1}^{\infty}s^{r-1-M}\CC_{\vv}\left(|X|^MI(|X|>cs)\right)\dif s, & \text{ for $\beta>-1/r$;}
			\end{cases}\\
			&\le&\begin{cases} C \CC_{\vv}\left(|X|^{(r+1)/(1+\beta)}\right)<\infty,& \text{
					for $-1<\beta<-1/r$;}\\
				C\CC_{\vv}\left(|X|^r\ln(1+|X|)\right)<\infty, & \text{ for $\beta=-1/r$;}\\
				C\CC_{\vv}\left(|X|^r\right)<\infty, & \text{ for $\beta>-1/r$.}
			\end{cases}
		\end{eqnarray*}
		We next prove $\Rmnum{1}_2<\infty$. By $C_r$'s inequality, we see that
		\begin{eqnarray}\label{21}
			\nonumber \Rmnum{1}_2&=&C\sum_{n=1}^{\infty}n^{r-2}\left[\sum_{i=1}^{n}\vv\left(|b_{ni}X_i|>1\right)+\sum_{i=1}^{n}\ee|b_{ni}X_i|^2I\left(|b_{ni}X_i|\le 1\right)\right]^{M/2}\\
			\nonumber &\le&C\sum_{n=1}^{\infty}n^{r-2}\left[\sum_{i=1}^{n}\vv\left(|b_{ni}X|>1\right)+\sum_{i=1}^{n}\ee|b_{ni}X|^2I\left(|b_{ni}X|\le 1\right)\right]^{M/2}\\
			\nonumber &\le&C\sum_{n=1}^{\infty}n^{r-2}\left[\sum_{i=1}^{n}\vv\left(|b_{ni}X|>1\right)\right]^{M/2}+C\sum_{n=1}^{\infty}n^{r-2}\left[\sum_{i=1}^{n}\ee|b_{ni}X|^2I\left(|b_{ni}X|\le 1\right)\right]^{M/2}\\
			&=&:\Rmnum{1}_{21}+\Rmnum{1}_{22}.
		\end{eqnarray}
		Taking $M$ large enough satisfying $r-2-(r-1)M/2<-1$, combining Markov's inequality under sublinear expectations and (\ref{06}) results in
		\begin{eqnarray*}
			\Rmnum{1}_{21}&\approx&C\sum_{n=1}^{\infty}n^{r-2}\left[\sum_{i=1}^{n}\vv\left(|X|>Cn^{1+\beta}i^{-\beta}\right)\right]^{M/2}\\
			&\le&\begin{cases} C\sum_{n=1}^{\infty}n^{r-2}\left[\sum_{i=1}^{n}\left(\frac{i^{\beta}}{n^{1+\beta}}\right)^{(r-1)/(1+\beta)}\right]^{M/2},& \text{
					for $-1<\beta<-1/r$;}\\
				C\sum_{n=1}^{\infty}n^{r-2}\left[\sum_{i=1}^{n}\left(\frac{i^{\beta}}{n^{1+\beta}}\right)^{r}\ln\left(1+\frac{i^{\beta}}{n^{1+\beta}}\right)\right]^{M/2}, & \text{ for $\beta=-1/r$;}\\
				C\sum_{n=1}^{\infty}n^{r-2}\left[\sum_{i=1}^{n}\left(\frac{i^{\beta}}{n^{1+\beta}}\right)^{r}\right]^{M/2}, & \text{ for $\beta>-1/r$;}
			\end{cases}\\
			&\le&\begin{cases} C\sum_{n=1}^{\infty}n^{r-2-(r-1)M/2}<\infty,& \text{
					for $-1<\beta<-1/r$;}\\
				C\sum_{n=1}^{\infty}n^{r-2-(r-1)M/2}(\ln n)^{M/2}<\infty, & \text{ for $\beta=-1/r$;}\\
				C\sum_{n=1}^{\infty}n^{r-2-(r-1)M/2}<\infty, & \text{ for $\beta>-1/r$.}
			\end{cases}
		\end{eqnarray*}

		We next establish $\Rmnum{1}_{22}<\infty$ in the following two cases.\\
		(\expandafter{\romannumeral1}) If $1<r<2$, taking $M$ large satisfying $r-2-Mr(1+\beta)/2<-1$, $r-2-M(r-1)/2<-1$, then by $\ee(|X|^r)\le \CC_{\vv}(|X|^r)<\infty$, we see that
		\begin{eqnarray*}
			\Rmnum{1}_{22}&\le& C\sum_{n=1}^{\infty}n^{r-2}\left(\sum_{i=1}^{n}b_{ni}^{r}\right)^{M/2}\\
			&\approx& C\sum_{n=1}^{\infty}n^{r-2}\left(\sum_{i=1}^{n}n^{-r(1+\beta)}i^{r\beta}\right)^{M/2}\\
			&\approx&\begin{cases} C\sum_{n=1}^{\infty}n^{r-2-Mr(1+\beta)/2}<\infty,& \text{
					for $-1<\beta<-1/r$;}\\
				C\sum_{n=1}^{\infty}n^{r-2-(r-1)M/2}(\ln n)^{M/2}<\infty, & \text{ for $\beta=-1/r$;}\\
				C\sum_{n=1}^{\infty}n^{r-2-(r-1)M/2}<\infty, & \text{ for $\beta>-1/r$.}
			\end{cases}
		\end{eqnarray*}
		(\expandafter{\romannumeral2}) If $r\ge 2$, then $(\ref{06})$ yields $\ee(X^2)<\infty$. Taking $M$ large satisfying $r-2-M(1+\beta)<-1$, $r-1-p/2<-1$, we deduce that
		\begin{eqnarray*}
			\Rmnum{1}_{22}&\le& C\sum_{n=1}^{\infty}n^{r-2}\left(\sum_{i=1}^{n}b_{ni}^{2}\right)^{M/2}\\
			&\approx& C\sum_{n=1}^{\infty}n^{r-2}\left(\sum_{i=1}^{n}n^{-2(1+\beta)}i^{2\beta}\right)^{M/2}\\
			&\approx&\begin{cases} C\sum_{n=1}^{\infty}n^{r-2-M(1+\beta)}<\infty,& \text{
					for $-1<\beta<-1/r$;}\\
				C\sum_{n=1}^{\infty}n^{r-2-M/2}(\ln n)^{M/2}<\infty, & \text{ for $\beta=-1/r$;}\\
				C\sum_{n=1}^{\infty}n^{r-2-M/2}<\infty, & \text{ for $\beta>-1/r$.}
			\end{cases}
		\end{eqnarray*}
	We next prove $\Rmnum{1}_3<\infty$. Note that (\ref{06}) yields $\ee(|X|^r)<\infty$. By $\ee(X_i)=\ee(-X_i)=0$,
	Markov's inequality under sub-linear expectations, choosing  $M$ large enough such that $r-2-[r(\beta+1)]M<-1$, we see that
	\begin{eqnarray}\label{21*}
		\nonumber&&\Rmnum{1}_3\le C\sum_{n=1}^{\infty}n^{r-2}\left(\sum_{i=1}^{n}|\ee(b_{ni}Y_{ni}-b_{ni}X_i)|+|\ee(-b_{ni}Y_{ni}+b_{ni}X_i)|\right)^M\\
		\nonumber&&\le C\sum_{n=1}^{\infty}n^{r-2}\left(\sum_{i=1}^{n}\ee|b_{ni}Y_{ni}-b_{ni}X_i|\right)^M\le C\sum_{n=1}^{\infty}n^{r-2}\left(\sum_{i=1}^{n}\ee|b_{ni}X_{i}|^{r}\right)^M\\
		\nonumber&&\quad \le C\sum_{n=1}^{\infty}n^{r-2}\left(\sum_{i=1}^{n}i^{\beta r}n^{-r(\beta+1)}\right)^M\\
		&&\quad \le \begin{cases}
			C\sum_{n=1}^{\infty}n^{r-2-[r(\beta+1)] M}, &\text{  for $-1<\beta<-1/r$,}\\
			C\sum_{n=1}^{\infty}n^{r-2-[r-1] M}(\log n)^M, &\text{  for $\beta=-1/r$,}\\
			C\sum_{n=1}^{\infty}n^{r-2-[r-1] M}, &\text{  for $\beta>-1/r$}
		\end{cases}\\
		\nonumber&&\quad<\infty.
	\end{eqnarray} 
Hence Therefore, by (\ref{19})-(\ref{21*}), we deduce that $I<\infty$.
	
		By the proof of $\Rmnum{1}_{11}<\infty$, we see that $\Rmnum{2}<\infty$.
		We finally establish $\Rmnum{3}<\infty$.  Since $b_{ni}\approx (i/n)^{\beta}(1/n)$, combining Lemma \ref{lem1}, and Lemma 4.5 in Zhang \cite{Zhang2016a} results in
		\begin{eqnarray*}
			\nonumber \Rmnum{3}&\le&C\sum_{n=1}^{\infty}n^{r-2}\sum_{i=1}^{n}n^{-(1+\beta)}i^{\beta}\ee\left(|X|I(|X|>Cn^{1+\beta}i^{-\beta})\right)\\
			\nonumber&\le& C\int_{1}^{\infty}x^{r-2}\int_{1}^{x}x^{-(1+\beta)}y^{\beta}\CC_{\vv}\left(|X|I(|X|>Cx^{1+\beta}y^{-\beta})\right)\dif y\dif x\\
			%\nonumber&\le& C\int_{1}^{\infty}\dif s\int_{1}^{s}s^{\frac{r-2-\beta}{1+\beta}-1}t^{\frac{\beta}{1+\beta}(r-1)}\CC_{\vv}\left(|X|I(|X|>cs)\right)\dif t\\
			\nonumber&&\mbox{    ( Setting $s=x^{1+\beta}y^{-\beta}$, $t=y$ )}\\
			\nonumber&\approx&\begin{cases} C\int_{1}^{\infty}s^{\frac{r-1}{1+\beta}-2}\CC_{\vv}\left(|X|I(|X|>cs)\right)\dif s,& \text{
					for $-1<\beta<-1/r$;}\\
				C\int_{1}^{\infty}s^{r-2}\ln(s)\CC_{\vv}\left(|X|I(|X|>cs)\right)\dif s, & \text{ for $\beta=-1/r$;}\\
				C\int_{1}^{\infty}s^{r-2}\CC_{\vv}\left(|X|I(|X|>cs)\right)\dif s, & \text{ for $\beta>-1/r$;}
			\end{cases}\\
			&\le& \begin{cases} C \CC_{\vv}\left(|X|^{(r+1)/(1+\beta)}\right)<\infty,& \text{
					for $-1<\beta<-1/r$;}\\
				C\CC_{\vv}\left(|X|^r\ln(1+|X|)\right)<\infty, & \text{ for $\beta=-1/r$;}\\
				C\CC_{\vv}\left(|X|^r\right)<\infty, & \text{ for $\beta>-1/r$.}
			\end{cases}
		\end{eqnarray*}
		
		We now prove that (\ref{07}) concludes (\ref{06}). By Remark \ref{rmk2}, we see that (\ref{07}) gives
		\begin{eqnarray}\label{22}
			\sum_{n=1}^{\infty}n^{r-2}\vv\left(\max_{1\le j\le n}\left|\sum_{i=1}^{j}b_{ni}X_{i}\right|>\varepsilon\right)<\infty.
		\end{eqnarray}
		Observing that
		\begin{eqnarray}\label{23}
			\max_{1\le j\le n}|b_{nj}X_{j}|\le\max_{1\le j\le n}\left|\sum_{i=1}^{j}b_{ni}X_{i}\right|,
		\end{eqnarray}
		by (\ref{22}) we obtain
		\begin{eqnarray}\label{24}
			\sum_{n=1}^{\infty}n^{r-2}\vv\left(\max_{1\le j\le n}|b_{nj}X_{j}|>\varepsilon\right)<\infty,
		\end{eqnarray}
		\begin{eqnarray}\label{25}
			\vv\left(\max_{1\le j\le n}|b_{nj}X_{j}|>\varepsilon\right)\rightarrow 0  \mbox{    as $n\rightarrow\infty$.}
		\end{eqnarray}
		By Lemma \ref{lem4} and (\ref{25}), we see that
		\begin{eqnarray}\label{26}
			\sum_{i=1}^{n}\vv\left(|b_{ni}X_{i}|>\varepsilon\right)\le C\vv\left(\max_{1\le j\le n}|b_{nj}X_{j}|>\varepsilon\right).
		\end{eqnarray}
		Hence, combining (\ref{26}) and (\ref{24}) results in
		\begin{eqnarray}\label{27}
			\sum_{n=1}^{\infty}n^{r-2}\sum_{i=1}^{n}\vv\left(|b_{ni}X_{i}|>\varepsilon\right)<\infty.
		\end{eqnarray}
		As in the proof of $\Rmnum{1}_{11}<\infty$, we obtain
		\begin{eqnarray*}
			\infty&>&\sum_{n=1}^{\infty}n^{r-2}\sum_{i=1}^{n}\vv\left(|b_{ni}X_{i}|>\varepsilon\right)\\
			&\approx&\begin{cases} C \CC_{\vv}\left(|X|^{(r+1)/(1+\beta)}\right)<\infty,& \text{
					for $-1<\beta<-1/r$;}\\
				C\CC_{\vv}\left(|X|^r\ln(1+|X|)\right)<\infty, & \text{ for $\beta=-1/r$;}\\
				C\CC_{\vv}\left(|X|^r\right)<\infty, & \text{ for $\beta>-1/r$.}
			\end{cases}
		\end{eqnarray*}
		Consequently, this finishes the proof of Theorem \ref{thm1}.
	\end{proof}
	\begin{proof}[Proof of Corollary \ref{cor1}] As in the proof of  Theorem 16 in Meng et al. \cite{Meng2019}, the proof is similar to that of Theorem \ref{thm1}, so it is omitted.
	\end{proof}
	\begin{proof}[Proof of Theorem \ref{thm2}] Taking $b_{ni}=A_{n-i}^{\alpha-1}/A_{n}^{\alpha}$, $0\le i\le n$, $n\ge1$, by the similar proof of Theorem 18 in Meng et al. \cite{Meng2019}, we deduce that the assumptions of Corollary \ref{cor1} hold. Hence (\ref{13}) follows from (\ref{09}).
		The proof of Theorem \ref{thm2} is finished.
	\end{proof}

	{\bf Acknowledgements}
	
	Not applicable.
	
	{\bf Funding}
	
	This research was supported by Doctoral Scientific Research Starting Foundation of Jingdezhen Ceramic University ( Nos.102/01003002031 ), Scientific Program of Department of Education of Jiangxi Province of China (Nos. GJJ190732, GJJ180737), Natural Science Foundation Program of Jiangxi Province 20202BABL211005, and National Natural Science Foundation of China (Nos. 61662037).
	
	{\bf Availability of data and materials}
	
	No data were used to support this study.
	
	{\bf Competing interests}
	
	The authors declare that they have no competing interests.
	
	{\bf Authors¡¯ contributions}
	
	All authors contributed equally and read and approved the final manuscript.

\end{document}